\begin{document}
\textheight=570pt 
\begin{center} {\Large\bf On 2-Movable Total Domination in the Join and Corona of Graphs}
\vskip 1cm {\bf  Ariel C. Pedrano}\\
\
\\Mathematics and Statistics Department\\
College of Arts and Sciences\\
University of Southeastern Philippines\\
Davao City, Philippines\\
\vskip 1cm {\bf  Rolando N. Paluga}\\
\
\\Mathematics Department\\
College of Mathematics and Natural Sciences\\
Caraga State University\\
Butuan City, Philippines\\
\
\\email: ariel.pedrano@usep.edu.ph, rnpaluga@carsu.edu.ph\\
\end{center}

\begin{abstract}
Let $G$ be a connected graph. A non-empty $T\subseteq V(G)$ is a $2$-\textit{movable total dominating set} of $G$ if $T$ is a total dominating set and for every pair $x,y \in T$, $T \backslash \{x, y\}$ is a total dominating set in $G$, or there exist $u, v \in V(G) \backslash T$ such that $u$ and $v$ are adjacent to $x$ and $y$, respectively, and $(T \backslash \{x,y\}) \cup \{u,v\}$ is a total dominating set in $G$. The $2$-\textit{movable total domination number} of $G$, denoted by $\gamma_{mt}^{2}(G)$, is the minimum cardinality of a 2-movable total dominating set of $G$. A 2-movable total dominating set with cardinality equal to $\gamma_{mt}^{2}(G)$ is called \textit{$\gamma_{mt}^{2}$-set} of $G$. This paper present the 2-movable total domination in the join and corona of graphs.
\end{abstract}
\begingroup
\newtheorem{theorem}{Theorem}[section]
\newtheorem{lemma}[theorem]{Lemma}
\newtheorem{proposition}[theorem]{Proposition}
\newtheorem{corollary}[theorem]{Corollary}
\newtheorem{definition}[theorem]{Definition}
\newtheorem{remark}[theorem]{Remark}
\endgroup
\def\E#1{\left\langle #1 \right\rangle}
\def\diag{\mathop{\fam 0\relax diag}\nolimits}

\section{Introduction}

All graphs considered in this paper are all connected, finite, simple and undirected. Let $G=(V, E)$ be a connected, finite, simple and undirected graph. The graph $G$ has a vertex set $V=V(G)$ and an edge set $E=E(G)$. Further, let the order of the graph $G$ be $p$, that is, $|V|=|V(G)|=p$ and the size of the graph $G$ be $q$, that is, $|E|=|E(G)|=q$.

One of the interesting fields of Graph Theory is graph domination. Several variants of domination has been introduced and explored such as total domination \cite{3a}, 1-movable domination in graphs \cite{7}, neighborhoud transversal domination of some graphs \cite{b} and others. Inspired by the study of Blair et. al \cite{2} about 1-movable domination, Pedrano and Paluga \cite{p} defined a new variant of domination and called it 2-movable domination. A non-empty $S\subseteq V(G)$ is a $2$-\textit{movable dominating set} of $G$ if $S$ is a dominating set and for every pair $x,y \in S$, $S \backslash \{x, y\}$ is a dominating set in $G$, or there exist $u, v \in V(G) \backslash S$ such that $u$ and $v$ are adjacent to $x$ and $y$, respectively, and $(S \backslash \{x,y\}) \cup \{u,v\}$ is a dominating set in $G$. The $2$-\textit{movable domination number} of $G$, denoted by $\gamma_{m}^{2}(G)$, is the minimum cardinality of a 2-movable dominating set of $G$. A 2-movable dominating set with cardinality equal to $\gamma_{m}^{2}(G)$ is called \textit{$\gamma_{m}^{2}$-set} of $G$. 

In this study, the concept of 2-movable total domination in a graph will be introduced
and initially investigated. In particular, the 2-movable total domination number will be given for the join and corona of graphs.

\section{Basic Concepts}

\begin{definition}
	\cite{7} A subset $S$ of $V(G)$ is a dominating set of $G$ if for every $v \in V(G) \backslash S$, there exists $u \in S$ such that $uv \in E(G)$, that is, $N_G[S]=V(G)$. The domination number of $G$ is denoted by $\gamma(G)$ which refers to the smallest cardinality of a dominating set of $G$. A dominating set of $G$ with cardinality equal to $\gamma(G)$ is called a $\gamma-$set of $G$.
\end{definition}

\begin{definition}
	\cite{3a} Let $G$ be a graph without isolated vertices. A subset $S$ of $V(G)$ is a total dominating set of $G$ if $N_G(S)=V(G)$. That is, every vertex in $V(G)$ is adjacent to some vertex in $S$. 
\end{definition}

\begin{definition}\label{Defn4}
	\cite{p} Let $G$ be a connected graph. A non-empty $T\subseteq V(G)$ is a $2$-\textit{movable total dominating set} of $G$ if $T$ is a total dominating set and for every pair $x,y \in T$, $T \backslash \{x, y\}$ is a total dominating set in $G$, or there exist $u, v \in V(G) \backslash T$ such that $u$ and $v$ are adjacent to $x$ and $y$, respectively, and $(T \backslash \{x,y\}) \cup \{u,v\}$ is a total dominating set in $G$. The $2$-\textit{movable total domination number} of $G$, denoted by $\gamma_{mt}^{2}(G)$, is the minimum cardinality of a 2-movable total dominating set of $G$. A 2-movable total dominating set with cardinality equal to $\gamma_{mt}^{2}(G)$ is called \textit{$\gamma_{mt}^{2}$-set} of $G$.
\end{definition}

\section{Main Results}

\begin{theorem}
	If $S$ is a 2-movable total dominating set of $G$, then $S$ is a 2-movable dominating set of $G$. Furthermore, $\gamma_{m}^2(G) \leq \gamma_{mt}^2(G)$.
\end{theorem}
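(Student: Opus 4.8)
The plan is to exploit a single elementary fact, namely that every total dominating set is a dominating set, and then to check that this passage from ``total'' to ``ordinary'' is preserved under both alternatives appearing in the movability clause.

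First I would record the basic inclusion: if $D$ is a total dominating set of $G$, then $N_G(D)=V(G)$, and hence $N_G[D]=N_G(D)\cup D=V(G)$, so $D$ is a dominating set. This observation will be applied repeatedly. Now assume $S$ is a 2-movable total dominating set. Since $S$ is total dominating, the observation shows $S$ is dominating, which secures the first requirement in the definition of a 2-movable dominating set.

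Next I would fix an arbitrary pair $x,y\in S$ and branch on the two cases furnished by Definition \ref{Defn4}. In the first case $S\setminus\{x,y\}$ is a total dominating set, hence a dominating set. In the second case there exist $u,v\in V(G)\setminus S$ adjacent to $x$ and $y$ respectively with $(S\setminus\{x,y\})\cup\{u,v\}$ total dominating, and so this set is dominating as well. In either branch the pair $x,y$ meets exactly the movability condition required for ordinary domination, and since the pair was arbitrary, $S$ is a 2-movable dominating set. For the inequality I would then take $S$ to be a $\gamma_{mt}^2$-set, so that $|S|=\gamma_{mt}^2(G)$; by the first part $S$ is a 2-movable dominating set, whence $\gamma_m^2(G)\le|S|=\gamma_{mt}^2(G)$ by minimality of $\gamma_m^2(G)$.

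I expect essentially no serious obstacle: the argument is a direct unwinding of the definitions. The only point demanding a moment of care is confirming that the two movability clauses are word-for-word identical except for the phrase ``total dominating set'' versus ``dominating set''—in particular the eligible replacement vertices $u,v$ are drawn from the same set $V(G)\setminus S$ with the same adjacency requirements—so that the stronger hypothesis transfers cleanly to the weaker conclusion in both the deletion and the substitution alternatives.
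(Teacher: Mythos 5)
Your proposal is correct and takes essentially the same approach as the paper: unwind Definition \ref{Defn4} using the elementary fact that every total dominating set is a dominating set (since $N_G(D)=V(G)$ implies $N_G[D]=V(G)$), then apply this to a $\gamma_{mt}^2$-set to obtain the inequality. In fact, your version is slightly more careful than the paper's own proof, which explicitly treats only the replacement alternative and silently skips the case where $S\setminus\{x,y\}$ is already a total dominating set; your handling of both branches is the cleaner reading of the definition.
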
\vspace{0.2mm}

\begin{proof}
	Suppose $S$ is a 2-movable total dominating set of $G$. It follows that $S$ is a total dominating set of $G$ by Definition \ref{Defn4}. Thus, $S$ is a dominating set of $G$. Since $S$ is a 2-movable total dominating set of $G$, then for every $a_1, a_2 \in S$, there exists $b_1, b_2 \in V(G) \backslash S$ such that $a_1b_1, a_2b_2 \in E(G)$ and $(S \backslash \{a_1, a_2\}) \cup \{b_1, b_2\}$ is a total dominating set of $G$. It follows that $(S \backslash \{a_1, a_2\}) \cup \{b_1, b_2\}$ is a dominating set of $G$. Hence, $S$ is 2-movable dominating set of $G$. Therefore, $$\gamma_{m}^2(G) \leq \gamma_{mt}^2(G).$$
\end{proof}

The next remark follows directly from Definition \ref{Defn4}:
\begin{remark}\label{Rmk:12}
	For any connected graph $G$ of order $n \geq 4$, $\gamma_{mt}^2(G) \geq 2$.
\end{remark}


\begin{theorem}\label{Thm:14}\rm
	Let $G$ and $H$ be graphs of order at least 2. Then
	\begin{center}
		$\gamma_{mt}^2 (G+H)=2$.
	\end{center}
\end{theorem}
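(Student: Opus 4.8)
The plan is to prove the equality by establishing the two inequalities $\gamma_{mt}^2(G+H) \geq 2$ and $\gamma_{mt}^2(G+H) \leq 2$ separately. For the lower bound I would first observe that the join $G+H$ is connected and has order $|V(G)| + |V(H)| \geq 4$, so Remark \ref{Rmk:12} applies directly and yields $\gamma_{mt}^2(G+H) \geq 2$. (Alternatively one can note that every total dominating set must contain at least two vertices, since a single vertex cannot have a neighbor in a one-element set.)

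For the upper bound I would exhibit a concrete $2$-movable total dominating set of cardinality $2$. Fix any $a \in V(G)$ and any $b \in V(H)$ and set $T = \{a, b\}$. The key structural fact about the join is that every vertex of $G$ is adjacent to every vertex of $H$. Hence every vertex of $H$ has the neighbor $a \in T$, and every vertex of $G$ has the neighbor $b \in T$; in particular $a$ is dominated by $b$ and $b$ by $a$. This shows $N_{G+H}(T) = V(G+H)$, so $T$ is a total dominating set.

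Next I would verify the movability condition. The only pair of distinct vertices in $T$ is $\{a, b\}$, and since $T \setminus \{a, b\} = \emptyset$ is not a total dominating set, I must produce a valid replacement. Using the hypotheses $|V(G)| \geq 2$ and $|V(H)| \geq 2$, choose $v \in V(G) \setminus \{a\}$ and $u \in V(H) \setminus \{b\}$. In the join $u$ is adjacent to $a$ and $v$ is adjacent to $b$, and both $u, v$ lie outside $T$, so the adjacency requirements of Definition \ref{Defn4} are met. Moreover $(T \setminus \{a,b\}) \cup \{u, v\} = \{u, v\}$ consists of one vertex from each side of the join, so by the argument of the previous paragraph it is again a total dominating set. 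This confirms that $T$ is $2$-movable and gives $\gamma_{mt}^2(G+H) \leq 2$, whence equality.

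The main obstacle, and the only place the hypotheses are actually used, is guaranteeing that suitable replacement vertices $u, v$ exist while respecting the constraint that the substitute for $x$ be adjacent to $x$ and the substitute for $y$ be adjacent to $y$. Both constraints hold precisely because $G$ and $H$ each contain a second vertex and because cross-side adjacency is automatic in the join; if either factor had order $1$, the movable step would fail and the argument would break down.
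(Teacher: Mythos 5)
Your proposal is correct and takes essentially the same route as the paper: choose one vertex from each factor, use the join's cross-adjacency to verify total domination, swap to a second such pair for the movability condition, and invoke Remark \ref{Rmk:12} for the lower bound. If anything, you are slightly more careful than the paper, which pairs each replacement vertex with the removed vertex on the \emph{same} side (where adjacency is not guaranteed), whereas you correctly cross sides so that the replacement for $a \in V(G)$ is taken from $V(H)$ and is therefore automatically adjacent to $a$.
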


\begin{proof}
	Let $G$ and $H$ be graphs of order at least 2, where $V(G+H) = V(G) \cup V(H)$ and $|V(G+H)| \geq 4$. Let $S=\{u,v\}$ where $u \in V(G)$ and $v \in V(H)$. We want to show that $S$ is a total dominating set of $G+H$. Let $x \in V(G+H)$. If $x \in V(G)$, then $xv \in E(G+H)$. If $x \in V(H)$, then $xu \in E(G+H)$. Therefore, $S$ is a total dominating set of $G+H$.
	
	Now, since $|V(G)| \geq 2$ and $|V(H)| \geq 2$, there exists $u' \in V(G)$ and $v' \in V(H)$ such that $u \not= u'$ and $v \not= v'$. Thus, $S'=(S \backslash \{u,v\}) \cup \{u',v'\} = \{u',v'\}$. By following the same arguments above, $S'$ is a total dominating set of $G+H$. Hence, $S$ is a 2-movable total dominating set of $G+H$.
	
	Therefore, $\gamma_{mt}^2 (G+H) \leq 2$. By Remark \ref{Rmk:12},  $\gamma_{mt}^2 (G+H) = 2$.
\end{proof}

\begin{theorem}\label{Thm:15}\rm
	Let $G$ be a graph of order at least 3. Then
	\begin{center}
		$\gamma_{mt}^2 (G+K_1)=\gamma_t (G)$.
	\end{center}
\end{theorem}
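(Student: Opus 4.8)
The plan is to prove the equality by establishing the two inequalities $\gamma_{mt}^2(G+K_1) \ge \gamma_t(G)$ and $\gamma_{mt}^2(G+K_1) \le \gamma_t(G)$ separately. Write $w$ for the vertex of $K_1$; in $G+K_1$ this $w$ is universal, being adjacent to every vertex of $G$. The single structural fact I would isolate first is a dichotomy for total domination in the join: a set $A \subseteq V(G+K_1)$ with $w \notin A$ is a total dominating set of $G+K_1$ if and only if it is a total dominating set of $G$ (once $w$ is excluded, a vertex $z \in V(G)$ can be dominated inside $A$ only through a $G$-neighbour), whereas any set containing $w$ together with at least one other vertex is automatically a total dominating set of $G+K_1$ (because $w$ dominates all of $V(G)$ and is itself dominated by that other vertex). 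This observation drives both bounds. Note also that $\gamma_t(G)$ is assumed well-defined, so $G$ has no isolated vertices and $\gamma_t(G) \ge 2$.

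For the lower bound, let $T$ be a $\gamma_{mt}^2$-set of $G+K_1$. If $w \notin T$, then $T \subseteq V(G)$ is a total dominating set of $G+K_1$, hence of $G$ by the dichotomy, so $|T| \ge \gamma_t(G)$. If $w \in T$, then since $T$ is total dominating and the neighbours of $w$ are exactly $V(G)$, there is some $y \in T \cap V(G)$. I would apply the $2$-movable condition to the pair $\{w, y\}$: either $T \setminus \{w, y\}$ is total dominating, or there are $u, v \in V(G+K_1)\setminus T$ adjacent to $w$ and $y$ respectively with $(T\setminus\{w,y\})\cup\{u,v\}$ total dominating. In the first case the set is $w$-free; in the second, $u$ adjacent to $w$ forces $u \in V(G)$, while $v \notin T$ with $v$ adjacent to $y$ forces $v \in V(G)$ (it cannot be $w$), so it is $w$-free as well. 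Either way the dichotomy turns it into a total dominating set of $G$ of cardinality at most $|T|$, giving $|T| \ge \gamma_t(G)$. This step is exactly where movability is used, which is why the value is $\gamma_t(G)$ rather than the total domination number $2$ of the join.

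For the upper bound I would exhibit a $2$-movable total dominating set of size $\gamma_t(G)$. Take a minimum total dominating set $D$ of $G$; by the dichotomy $D$ is already a total dominating set of $G+K_1$, so it remains to check the movability clause for every pair $x, y \in D$. The key device is that a guard may always be sent to the universal vertex $w$: since $w$ is adjacent to $x$ and $w \notin D$, and since any set containing $w$ and one further vertex is total dominating, the set $(D\setminus\{x,y\})\cup\{w, v\}$ is total dominating as soon as we can pick a legal landing vertex $v \ne w$ for $y$, namely some $v \in N_G(y)\setminus D$. Thus the pair is handled whenever $x$ or $y$ has a neighbour in $G$ outside $D$.

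The hard part is precisely the pairs in which both $x$ and $y$ are \emph{internal}, meaning $N_G(x) \subseteq D$ and $N_G(y) \subseteq D$: then neither guard has any destination in $V(G)$, the only common destination is $w$ (which collides, forcing $u=v=w$), and $D\setminus\{x,y\}$ fails to be total dominating by minimality of $D$. I would resolve this by choosing, among all minimum total dominating sets, one with at most one internal vertex; then every pair contains a non-internal vertex and the argument above applies. Proving that such a minimum total dominating set always exists—for instance by an exchange argument that replaces a second internal vertex and its obligatory private $D$-neighbour by a boundary pair—is the delicate point on which the upper bound rests, and is where I would concentrate the effort.
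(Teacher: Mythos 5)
Your lower bound is sound, and in fact more careful than the paper's own treatment: the paper applies $2$-movability only to a pair containing the vertex $w$ of $K_1$ (tacitly assuming $w\in T$) and never disposes of the case $w\notin T$, which you handle via your dichotomy. The genuine problem is the last step of your upper bound, exactly where you flagged it, and the repair you propose cannot work: a minimum total dominating set with at most one internal vertex need not exist. Let $G$ be the tree obtained from the double star with adjacent centers $c_1,c_2$ by subdividing every pendant edge once; that is, $V(G)=\{c_1,c_2,m_1,m_2,n_1,n_2,l_1,l_2,k_1,k_2\}$ with edges $c_1c_2$, $c_1m_i$, $m_il_i$, $c_2n_j$, $n_jk_j$ for $i,j\in\{1,2\}$. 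Any total dominating set must contain the supports $m_1,m_2,n_1,n_2$ (each is the unique neighbour of a leaf), and the only way to totally dominate these four vertices by adding just two vertices is to add $c_1$ and $c_2$; hence $D=\{c_1,c_2,m_1,m_2,n_1,n_2\}$ is the \emph{unique} $\gamma_t$-set of $G$, with $\gamma_t(G)=6$. But $N_G(c_1)=\{c_2,m_1,m_2\}\subseteq D$ and $N_G(c_2)=\{c_1,n_1,n_2\}\subseteq D$, so the unique minimum total dominating set has two internal vertices, and the exchange argument you were counting on has nothing to produce.

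Whether this is fatal to the theorem, rather than only to your proof, depends on an ambiguity in Definition \ref{Defn4} that your write-up implicitly resolves the hard way: may the two replacement vertices coincide? If ``there exist $u,v\in V(G)\setminus T$'' is read literally, allowing $u=v$, then the bad pairs are harmless: for internal $x,y\in D$ with $|D|\ge 3$ take $u=v=w$, and $(D\setminus\{x,y\})\cup\{w\}$ is a total dominating set of $G+K_1$ because $w$ dominates all of $V(G)$ and is itself adjacent to the nonempty remainder $D\setminus\{x,y\}$ (and $|D|=2$ with both vertices internal is impossible in a connected graph of order at least $3$). Under that reading \emph{every} $\gamma_t$-set of $G$ is $2$-movable in $G+K_1$ and no special structure is needed. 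If instead $u\neq v$ is required, the theorem itself is false for the graph above: $D$ is not $2$-movable (for the pair $c_1,c_2$ the only destination outside $D$ available to either vertex is $w$), and no $6$-element set $T$ containing $w$ is $2$-movable either, since for a pair $\{w,y\}$ the replacement set is forced to be $w$-free, hence a total dominating set of $G$ of cardinality at most $6$, hence equal to $D$; as this must hold for every $y\in T\cap V(G)$ it forces $T\cap V(G)=D\setminus\{z\}$ for some $z$, whereupon the two replacements must be $y$ and $z$ with $y\in T$, contradicting $u,v\notin T$. So in that case $\gamma_{mt}^2(G+K_1)>\gamma_t(G)$. For comparison, the paper's own upper bound simply asserts that every $\gamma_t$-set of $G$ satisfies its movability dichotomy (conditions (i)/(ii)) \emph{inside} $G$; that assertion fails for the same pair $c_1,c_2$, so the published argument breaks at precisely the point you identified and, unlike you, did not acknowledge.
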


\begin{proof}
	Let $S$ be a $\gamma_t$-set of $G$. Let $x \in V(G+K_1)$. Suppose $x \in V(G)$. Since $S$ is a total dominating set of $G$, there exists $y \in S$ such that $xy \in E(G)$. Note that $E(G) \subseteq E(G+K_1)$. Thus, $xy \in E(G+K_1)$. Therefore, $S$ is a total dominating set of $G+K_1$.
	
	Now, let $x_1, x_2 \in S$. We want to show that $S$ is a 2-movable total dominating set of $G+K_1$. Since $S$ is a $\gamma_t$-set of $G$, at least one of the following conditions hold:
	\begin{enumerate}
		\item[(i)] $S \backslash \{x_1, x_2\}$ is a total dominating set of $G$ or
		\item[(ii)] for each $i$, there exists $y_i \in (V(G) \backslash S) \cap N(x_i)$ such that $(S \backslash \{x_1, x_2\}) \cup \{y_1, y_2\}$ is a total dominating set of $G$.
	\end{enumerate}	 
	
	Suppose $S \backslash \{x_1,x_2\}$ is a total dominating set of $G$. Let $x \in V(G+K_1)$. If $x \in V(G)$, there exists $y \in S \backslash \{x_1,x_2\}$ such that $xy \in E(G) \subseteq E(G+K_1)$. Suppose $x \in V(K_1)$. Let $x_3 \in S \backslash \{x_1,x_2\}$. Then $xx_3 \in E(G+K_1)$. Therefore, condition $(i)$ holds.
	
	Suppose condition $(ii)$ holds. Then for each $i$, there exists\\
	$y_i \in (V(G) \backslash S) \cap N_G(x_i)$ such that $S'=(S \backslash \{x_1, x_2\}) \cup \{y_1, y_2\}$ is a total dominating set of $G$. Since $V(G) \subseteq V(G+K_1)$ and $N_G(x_i) \subseteq N_{G+K_1} (x_i)$, then $y_i \in (V(G+K_1) \backslash S) \cap N_{G+K_1}(x_i)$. Let $x \in V(G+K_1)$. If $x \in V(G)$, then there exists $y \in S'$ such that $xy \in E(G) \subseteq E(G+K_1)$ since $S'$ is a total dominating set of $G$. Suppose $x \in V(K_1)$. Let $x_3 \in S'$. Then $xx_3 \in E(G+K_1)$. 
	
	Hence, $S$ is a 2-movable total dominating set of $G+K_1$. Therefore,\\
	$\gamma_{mt} (G+K_1) \leq |S|=\gamma_t (G)$.
	
	Now, suppose $\gamma_{mt} (G+K_1) \leq \gamma_t G$. Then there exists a minimum 2-movable total dominating set $T$ of $G+K_1$ such that $|T| < \gamma_t (G)$.
	
	Let $x_1, x_2 \in T$ such that $x_i \in V(K_1)$. Since $T$ is a 2-movable total dominating set of $G+K_1$, at least one of the following conditions holds:
	\begin{enumerate}
		\item[(i)] $T^* = T \backslash \{x_1, x_2\}$ is a total dominating set of $G+K_1$ or
		\item[(ii)] for each $i$, there exists $y_i \in (V(G+K_1) \backslash T) \cap N_{G+K_1}(x_i)$ such that\\
		$T' = (T \backslash \{x_1, x_2\}) \cup \{y_1, y_2\}$ is a total dominating set of $G+K_1$.
	\end{enumerate}	
	
	Suppose $(i)$ holds. Let $x \in V(G) \subseteq V(G+K_1)$. Then, there exists $y \in T^*$ such that $xy \in E(G+K_1)$. Since $x, y \in V(G)$, $xy \in E(G)$. Thus, $T^*$ is a total dominating set of $G$. But this means that $|T^*| < |T| < \gamma_t (G)$. This is a contradiction since $\gamma_t (G)$ is the minimum cardinality of a total dominating set in G.
	
	Suppose $(ii)$ holds. Let $x \in V(G) \subseteq V(G+K_1)$. Then, there exists $y \in T'$ such that $xy \in E(G+K_1)$. Since $x,y \in V(G)$, $xy \in E(G)$. Thus, $T'$ is a total dominating set of $G$. But this means that $|T'|=|T| < \gamma_t G$. This is a contradiction since $\gamma_t (G)$ is the minimum cardinality of a total dominating set in $G$. Therefore, $$\gamma_{mt}^2 (G+K_1) = \gamma_t G.$$
\end{proof}

\begin{lemma}\rm\label{Lem:2}
	For any corona graph $G \circ H$ and $a \in V(G)$, if $T$ is 2-movable total dominating set of $G \circ H$, $a \in V(G) \cap T$, and $T_a = T \cap V(H^a)$, then one of the following holds:
	\begin{enumerate}
		\item[(i)] $T_a \backslash \{a,u\}$ where $u \in T_a$ is a total dominating set of $H^a$, or
		\item[(ii)] there exists $x_a$ and $x_u$ such that $ax_a, ux_u \in E(H^a)$ and $(T_a \backslash \{a,u\}) \cup \{x_a, x_u\}$ is a total dominating set of $H^a$
	\end{enumerate}
\end{lemma}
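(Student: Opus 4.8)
The plan is to apply the defining property of the $2$-movable total dominating set $T$ directly to the pair $\{a,u\}$, with $a\in V(G)\cap T$ and $u\in T_a$, and then to \emph{restrict} the resulting global statement about $G\circ H$ to the single copy $H^a$. The entire argument hinges on one structural observation about the corona, which I would record first: for every $v\in V(H^a)$ we have $N_{G\circ H}(v)=\{a\}\cup N_{H^a}(v)$, so $a$ is the only vertex outside $V(H^a)$ capable of helping to totally dominate $H^a$, and $a$ is adjacent to \emph{every} vertex of $H^a$. Note also that since $a\in V(G)$ while $T_a\subseteq V(H^a)$, we have $a\notin T_a$, so throughout $T_a\setminus\{a,u\}=T_a\setminus\{u\}$.

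Since $T$ is a $2$-movable total dominating set and $a,u\in T$, Definition~\ref{Defn4} yields one of two cases. In the first case $T\setminus\{a,u\}$ is a total dominating set of $G\circ H$. Fix any $v\in V(H^a)$; it has a neighbor in $T\setminus\{a,u\}$, and because $a\notin T\setminus\{a,u\}$, the structural observation forces that neighbor to lie in $N_{H^a}(v)\cap(T_a\setminus\{u\})$. As $v$ was arbitrary, $T_a\setminus\{u\}=T_a\setminus\{a,u\}$ totally dominates $H^a$, which is conclusion (i).

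In the second case there exist $x_a\in N_{G\circ H}(a)\setminus T$ and $x_u\in N_{G\circ H}(u)\setminus T$ with $T'=(T\setminus\{a,u\})\cup\{x_a,x_u\}$ a total dominating set of $G\circ H$. I would first pin down $x_u$: its neighbors form $\{a\}\cup N_{H^a}(u)$, and since $x_u\notin T$ while $a\in T$ we have $x_u\neq a$, so $x_u\in N_{H^a}(u)\subseteq V(H^a)$, giving $ux_u\in E(H^a)$. Restricting $T'$ to $H^a$ as before shows that every $v\in V(H^a)$ has a neighbor in $T'\cap V(H^a)=(T_a\setminus\{u\})\cup(\{x_a,x_u\}\cap V(H^a))$.

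The main obstacle is precisely that $x_a$ need not lie in $V(H^a)$: it could instead be a $G$-neighbor of $a$, in which case it does not appear in the restricted set. I would resolve this using the universality of $a$ together with the elementary fact that enlarging a total dominating set preserves the property. If $x_a\in V(H^a)$, then $(T_a\setminus\{a,u\})\cup\{x_a,x_u\}$ totally dominates $H^a$ and conclusion (ii) is immediate. If instead $x_a\notin V(H^a)$, the restriction argument already gives that $(T_a\setminus\{a,u\})\cup\{x_u\}$ totally dominates $H^a$; since $a$ is adjacent to every vertex of $H^a$, I may choose any $x_a\in V(H^a)$ (so that $a$ and $x_a$ are adjacent) and append it, and the enlarged set $(T_a\setminus\{a,u\})\cup\{x_a,x_u\}$ remains a total dominating set of $H^a$, again yielding conclusion (ii). In either case one of (i), (ii) holds, completing the proof.
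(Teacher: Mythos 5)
Your proof is correct and follows essentially the same route as the paper: apply Definition~\ref{Defn4} to the pair $\{a,u\}$ and restrict the resulting total dominating set of $G \circ H$ to the copy $H^a$, using the structural fact that $a$ is the only vertex outside $V(H^a)$ adjacent to vertices of $H^a$. Where you differ is in execution, and your version is tighter than the published one. First, you explicitly justify that $x_u \in V(H^a)$ (since $x_u \notin T$ while $a \in T$ forces $x_u \neq a$); the paper simply sets $x_u = v$ without ruling out $v = a$. Second, and more importantly, the paper's handling of the replacement vertex for $a$ is internally inconsistent: it opens case (2) with ``Let $b \in V(H^a)$'' and then later argues ``since $y \in V(H^a)$ and $b \notin V(a+H^a)$, $y \neq b$,'' so it never cleanly separates the two possible locations of $b$. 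You split explicitly on whether $x_a \in V(H^a)$: if it is, the restriction of $T'$ to $H^a$ already yields conclusion (ii); if it is not, the restriction shows $(T_a \setminus \{a,u\}) \cup \{x_u\}$ totally dominates $H^a$, and since $a$ is adjacent to every vertex of $H^a$ you may append any vertex of $V(H^a)$ as $x_a$ (supersets of total dominating sets within $V(H^a)$ remain total dominating sets) to land in (ii). This is exactly the repair the paper's case (2) needs, so your write-up reads as a corrected version of the published proof rather than a genuinely different argument.
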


\begin{proof}
	Suppose $a \in V(G)$ and $T$ is a 2-movable total dominating set of $G \circ H$. Let $u \in T_a$. Since $T$ is a 2-movable total dominating set of $G \circ H$, then one of the following conditions holds:
	\begin{enumerate}
		\item[(1)] $T \backslash \{a,u\}$ is a total dominating set of $G \circ H$, or
		\item[(2)] there exists $b$ and $v$ such that $ab, uv \in E(G \circ H)$ and $(T \backslash \{a,u\}) \cup \{b, v\}$ is a total dominating set of $G \circ H$
	\end{enumerate}
	
	Suppose $(1)$ holds. Let $x \in V(H^a) \subseteq V(G \circ H)$. Since $T \backslash \{a,u\}$ is a total dominating set of $G \circ H$, there exists $y \in T \backslash \{a,u\}$ such that $xy \in E(G \circ H)$. Note that $y \in V(H^a)$. It follows that $y \in T_a \backslash \{a,u\}$. Thus, $T_a \backslash \{a,u\}$ where $u \in T_a$, is a total dominating set of $H^a$.
	
	Suppose $(2)$ holds. Let $b \in V(H^a)$. Further, suppose $x_a=b$ and $x_u=v$. Let $x \in V(H^a) \subseteq V(G \circ H)$. Since $(T \backslash \{a,u\}) \cup \{b,v\}$ is a total dominating set of $G \circ H$, there exists $y \in (T \backslash \{a,u\}) \cup \{b,v\}$ such that $xy \in E(G \circ H)$. Note that $y \in V(H^a)$ since $x \in V(H^a)$ and $y \not= a$. Thus, $y \in (T_a \backslash \{a,u\}) \cup \{b,v\}$ since $[(T \cap V(H^a)) \backslash \{a,u\}] \cup \{b,v\} = (T_a \backslash \{a,u\}) \cup \{b,v\}$. Since $y \in V(H^a)$ and $b \notin V(a + H^a)$, $y \not= b$. It follows that $y \in (T_a \backslash \{a,u\}) \cup \{v\} \subseteq (T_a \backslash \{a,u\}) \cup \{x_a, v\}$. Moreover, $xy \in E(H^a)$. Hence, $(T_a \backslash \{a,u\}) \cup \{x_a,v\}$ is a total dominating set of $H^a$.
\end{proof}	

\begin{lemma}\rm\label{Lem:3}
	Let $T$ be a total dominating set of $G \circ H$ and $a \in V(G)$. If $a \notin T$, then $T \cap V(H^a)$ is a total dominating set of $H^a$.
\end{lemma}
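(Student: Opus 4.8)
The plan is to argue directly from the definition of total domination, exploiting the rigid neighborhood structure of vertices lying in a branch copy $H^a$ of the corona. The central observation I would establish first is that for any $x \in V(H^a)$, the only neighbors of $x$ in $G \circ H$ are the single vertex $a \in V(G)$ together with the neighbors of $x$ inside $H^a$; that is, $N_{G \circ H}(x) = \{a\} \cup N_{H^a}(x)$. This is immediate from the construction of the corona, in which $a$ is joined to every vertex of $H^a$ and no other vertex of $G \circ H$ is adjacent to $x$.

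With this in hand, I would fix an arbitrary $x \in V(H^a)$ and use the hypothesis that $T$ is a total dominating set of $G \circ H$ to produce a vertex $y \in T$ with $xy \in E(G \circ H)$. By the neighborhood description above, $y \in \{a\} \cup N_{H^a}(x)$. Since by assumption $a \notin T$ while $y \in T$, we must have $y \neq a$, and hence $y \in N_{H^a}(x) \subseteq V(H^a)$. This places $y$ in $T \cap V(H^a)$ and guarantees $xy \in E(H^a)$. Because $x$ was arbitrary, every vertex of $H^a$ is adjacent within $H^a$ to some vertex of $T \cap V(H^a)$, which is exactly the statement that $T \cap V(H^a)$ is a total dominating set of $H^a$.

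There is no serious obstacle here; the only point requiring care is the neighborhood identity, and in particular the use of the hypothesis $a \notin T$ to exclude $a$ as the dominating vertex. It is worth noting that the argument also implicitly forces $H^a$ to have no isolated vertices: an isolated vertex of $H^a$ would have $a$ as its unique neighbor in $G \circ H$ and so could only be totally dominated by $a$, which the hypothesis excludes. Thus the conclusion is consistent, and in fact $T \cap V(H^a)$ is guaranteed to be nonempty, since $V(H^a)$ is nonempty and each of its vertices acquires a dominator inside $T \cap V(H^a)$.
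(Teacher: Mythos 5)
Your proof is correct and follows essentially the same route as the paper: fix $x \in V(H^a)$, use total domination of $T$ in $G \circ H$ to get a dominator $y \in T$, and argue $y$ must lie in $V(H^a)$, so $T \cap V(H^a)$ totally dominates $H^a$. In fact your write-up is slightly more careful than the paper's at the one delicate point, since you explicitly invoke the hypothesis $a \notin T$ to rule out $y = a$ via the neighborhood identity $N_{G \circ H}(x) = \{a\} \cup N_{H^a}(x)$, whereas the paper asserts $y \in V(H^a)$ without noting that this exclusion is where $a \notin T$ is used.
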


\begin{proof}
	Suppose $T$ is a total dominating set of $G \circ H$ and $a \in V(G) \backslash T$. Let $x \in V(H^a)$. Since $T$ is a total dominating set of $G \circ H$, there exists $y \in T$ such that $xy \in E(G \circ H)$. Note that $y \in V(H^a)$ since $xy \in E(G \circ H)$ and $x \in V(H^a)$. Moreover, $y \in T \cap V(H^a)$. Thus, $T \cap V(H^a)$ is a total dominating set of $H^a$.
\end{proof}	

\begin{theorem}\label{Thm:16}\rm
	Let $G$ and $H$ be connected graphs such that $|V(G \circ H)| \geq 3$ and $\gamma_t (H) < |V(G)|$. Then
	\begin{center}
		$\gamma_{mt}^2 (G \circ H)=|V(G)| \gamma_t (H)$.
	\end{center}
\end{theorem}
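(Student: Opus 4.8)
The plan is to establish the two matching bounds $\gamma_{mt}^2(G\circ H)\ge |V(G)|\,\gamma_t(H)$ and $\gamma_{mt}^2(G\circ H)\le |V(G)|\,\gamma_t(H)$. Throughout I write the vertex set as the disjoint union $V(G\circ H)=V(G)\cup\bigcup_{a\in V(G)}V(H^a)$, and I use the two structural facts that each apex $a\in V(G)$ is adjacent to every vertex of its own copy $H^a$, while a vertex of $H^a$ has no neighbour outside $\{a\}\cup V(H^a)$. For the lower bound, let $T$ be any $\gamma_{mt}^2$-set and put $T_a=T\cap V(H^a)$, so that $|T|=|T\cap V(G)|+\sum_{a\in V(G)}|T_a|$. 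I would prove that each copy contributes at least $\gamma_t(H)$ to this sum, counting the apex $a$ itself whenever $a\in T$, and then add the contributions over the $|V(G)|$ copies.

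The per-copy bound splits into three cases. If $a\notin T$, then Lemma~\ref{Lem:3} shows $T_a$ totally dominates $H^a$, so $|T_a|\ge\gamma_t(H)$. If $a\in T$ and $T_a\ne\emptyset$, choose any $u\in T_a$ and invoke Lemma~\ref{Lem:2}: either $T_a\setminus\{u\}$ totally dominates $H^a$, forcing $|T_a|\ge\gamma_t(H)+1$, or $(T_a\setminus\{u\})\cup\{x_a,x_u\}$ does, a set of at most $|T_a|+1$ vertices, forcing $|T_a|\ge\gamma_t(H)-1$; in either subcase $|T_a|+1\ge\gamma_t(H)$. The last configuration, $a\in T$ with $T_a=\emptyset$, I would exclude outright: then $a$ is the only member of $T$ adjacent to any vertex of $H^a$, so $a$ must be dominated by a $G$-neighbour $z\in T$, and testing the pair $\{a,z\}$ against Definition~\ref{Defn4} yields a contradiction. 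Indeed, no vertex of $T\setminus\{a,z\}$ is adjacent to any vertex of $H^a$, so after deleting $\{a,z\}$ the copy is undominated, and in any legal relocation $H^a$ must be totally dominated by the two replacements alone; but the replacement $v$ of $z$ lies in $V(H^z)\cup N_G(z)$, hence outside $V(H^a)$, leaving only the replacement $u$ of $a$ to dominate the copy, which a single vertex cannot do. Hence $a\in T$ forces $T_a\ne\emptyset$, and summing the per-copy bounds gives $|T|\ge|V(G)|\,\gamma_t(H)$.

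For the upper bound I would exhibit a 2-movable total dominating set of exactly this size. The candidate is $T=\bigcup_{a\in V(G)}D_a$, where each $D_a$ is a $\gamma_t$-set of $H^a$ chosen with care (see below); plainly $|T|=|V(G)|\,\gamma_t(H)$, and $T$ is total dominating because $D_a$ totally dominates $H^a$ while the apex $a$ is dominated by the nonempty $D_a$. Note that $a\notin T$ for every $a$, so every apex is available as a relocation target. It then remains to check 2-movability for an arbitrary pair $x,y\in T$. When $x\in V(H^a)$ and $y\in V(H^b)$ lie in distinct copies, I relocate each vertex to its own apex, $x\mapsto a$ and $y\mapsto b$; since $a,b\notin T$, the apex $a$ totally dominates $H^a$ and is itself dominated by $D_a\setminus\{x\}\ne\emptyset$, symmetrically for $b$, and every other copy is untouched, so $(T\setminus\{x,y\})\cup\{a,b\}$ is again a total dominating set.

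The same-copy case $x,y\in D_a$ is the step I expect to be the main obstacle, because the only relocation target outside $H^a$ is the single apex $a$, which can absorb just one of the two vertices. My strategy is to send one vertex to $a$ — by universality this alone totally dominates $H^a$, and $a$ is then dominated by the image of the second vertex — and to move the second vertex to a neighbour in $V(H^a)\setminus D_a$. This succeeds precisely when at least one of $x,y$ has an $H^a$-neighbour outside $D_a$, so the whole argument reduces to choosing $D_a$ so that no two of its vertices simultaneously have all their $H^a$-neighbours inside $D_a$: a pair of such enclosed vertices can neither be moved (both would demand the apex) nor deleted (only $\gamma_t(H)-2$ vertices would remain to dominate $H^a$). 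Securing a $\gamma_t$-set of $H$ with at most one such enclosed vertex is the combinatorial heart of the proof, and this is exactly where I anticipate the hypothesis $\gamma_t(H)<|V(G)|$ entering — it should supply the room (a vertex of $H^a$ kept out of $D_a$) needed to carry out these relocations; I would pin down its precise role while completing this case analysis.
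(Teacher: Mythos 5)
Your lower bound is complete and correct, and it is in fact more careful than the paper's own: the paper argues by contradiction through Lemmas \ref{Lem:2} and \ref{Lem:3} and, when it invokes Lemma \ref{Lem:2}, silently assumes the copy in question meets $T$, whereas you isolate the configuration $a \in T$, $T_a = \emptyset$ and eliminate it with a correct argument (the pair $\{a,z\}$ can be neither deleted nor relocated). Your cross-copy relocation is exactly the paper's Case~2. So everything up to the same-copy case is fine.

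The genuine gap is the one you flag yourself: the same-copy case of the upper bound is never finished, so $\gamma_{mt}^2(G \circ H) \leq |V(G)|\gamma_t(H)$ is not established. Worse, the completion you propose --- choose each $D_a$ to be a $\gamma_t$-set of $H^a$ with at most one enclosed vertex --- is impossible in general, so that route cannot be repaired. Let $H_0$ be the tree obtained from two disjoint spiders (subdivide each edge of $K_{1,3}$ once, giving centre $c$, middle vertices $m_1,m_2,m_3$ and leaves $\ell_1,\ell_2,\ell_3$, plus a primed copy) by joining the centres $c$ and $c'$. Every total dominating set of $H_0$ must contain the six supports $m_i, m_i'$, and the only way to give all six of them neighbours using just two further vertices is to take $c$ and $c'$; hence $H_0$ has a \emph{unique} $\gamma_t$-set, of size $8 < 14 = |V(H_0)|$, and in it both $c$ and $c'$ are enclosed. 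The paper's Case~1 handles the same-copy pair differently: it moves one vertex of the pair to the apex $a$, which is adjacent to all of $V(H^a)$, so the copy is totally dominated by $a$ alone and one only needs some vertex $w$ of $V(H^a)$ left in the set so that $a$ itself is dominated. This is where the hypothesis enters, and in the proof it is used as $\gamma_t(H) < |V(H)|$; the $\gamma_t(H) < |V(G)|$ printed in the statement is evidently a misprint, and your guess that it should provide ``room inside the copy'' is right. Note, however, that the paper's move is itself loose: it takes $w$ from inside $S_a$ and never checks that $w$ is adjacent to the second deleted vertex, which amounts to replacing \emph{both} deleted vertices by the single apex $a$. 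For an enclosed pair such as $c,c'$ in $H_0$ this is forced, since the apex is the only neighbour of either vertex outside the set; it is legitimate only under the reading of Definition \ref{Defn4} in which the two replacement vertices $u,v$ may coincide. With that reading your argument can be completed: relocate one vertex to $a$ and the other to an $H^a$-neighbour outside $D_a$ when such a neighbour exists, and otherwise relocate both to $a$, observing that a doubly-enclosed pair forces $\gamma_t(H) \geq 3$ (else $H = K_2$, excluded by $\gamma_t(H) < |V(H)|$), so $D_a \setminus \{x,y\} \neq \emptyset$ keeps $a$ dominated. Without that reading, the pair $\{c,c'\}$ cannot be moved at all, and the construction --- indeed the stated equality --- fails for $H_0$.
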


\begin{proof}
	For each $x \in V(G)$, let $S_x$ be a $\gamma_t$-set of $H^x$. Further, let $S = \displaystyle \bigcup_{x \in V(G)} S_x$ and $a \in V(G \circ H)$. If $a \in V(G)$, then there exists $b \in S_a \subseteq S$ such that $ab \in E(G \circ H)$. Suppose $a \in V(H^y)$ for some $y \in V(G)$. Since $S_y$ is a $\gamma_t$-set of $H^y$, then there exists $b \in S_y \subseteq S$ such that $ab \in E(H^y) \subseteq E(G \circ H)$. Thus, $S$ is a total dominating set of $G \circ H$.
	
	Let $u,v \in S$. Then there exists $a,b \in V(G)$ such that $u \in S_a$ and $v \in S_b$. To show that $S$ is a 2-movable total dominating set of $G \circ H$, we consider the following cases:
	
	\noindent \textbf{Case 1:} $a =b$
	
	Since $\gamma_t(H) < |V(H)|$, there exists $w \in S_a$ such that $u \not= w$ and $v \not= w$. Let $S'=(S \backslash \{u,v\}) \cup \{a,w\}$. Since $S$ is a total dominating set of $G \circ H$ and $(S_a \backslash \{u,v\}) \cup \{a,w\}$ is a total dominating set of $H^a$, it follows that $S'$ is a total dominating set of $G \circ H$.
	
	\noindent \textbf{Case 2:} $a \not=b$
	
	Since $S$ is a total dominating set of $G \circ H$, $(S_a \backslash \{u\}) \cup \{a\}$ is a total dominating set of $H^a$ and $(S_b \backslash \{v\}) \cup \{b\}$ is a total dominating set of $H^b$, then $S' = (S \backslash \{u,v\}) \cup \{a,b\}$ is a total dominating set of $G \circ H$.
	
	Hence, $S$ is a 2-movable total dominating set of $G \circ H$. Therefore,
	\begin{align*}
		\gamma_{mt} (G \circ H) \leq |S| &= \displaystyle \sum_{x \in V(G)} |S_x|= \displaystyle \sum_{x \in V(G)} \gamma_t (H^x) \\
		&= \displaystyle \sum_{x \in V(G)} \gamma_t (H) = |V(G)| \cdot \gamma_t (H)
	\end{align*}	
	
	Suppose $\gamma_{mt} (G \circ H) < |V(G)| \gamma_{t}(H)$. Then there exists a $\gamma_{mt}^2$-set $T$ of $G \circ H$ such that $|T|< |V(G)| \gamma_{t}(H)$. For each $x \in V(G)$, let $T_x=T \cap V(x + H^x)$. Since $|T|<|V(G)| \gamma_{t}(H) $, there exists $a \in V(G)$ such that $|T_a| < \gamma_t (H) = \gamma_t (H^a)$. Suppose $a \in T$. Then $T_a = S_a \cup \{a\}$, that is, $|T_a| = |S_a| + 1$. By Lemma \ref{Lem:2}, one of the following conditions holds:
	\begin{enumerate}
		\item[(i)] $T_a \backslash \{a,u\}$ where $u \in T_a$ is a total dominating set of $H^a$, or
		\item[(ii)] there exists $x_a$ and $x_u$ such that $ax_a, ux_u \in E(H^a)$ and $(T_a \backslash \{a,u\}) \cup \{x_a, x_u\}$ is a total dominating set of $H^a$
	\end{enumerate}
	
	Suppose $(i)$ holds. Then $S_a \backslash \{a,u\}$ is a total dominating set of $H^a$. Now, $|S_a \backslash \{a,u\}| < |S_a| < |T_a| <\gamma_t (H^a)$. This is a contradiction since $\gamma_t (H^a)$ is the minimum cardinality of total dominating set of $H^a$. Suppose $(ii)$ holds. Then $(S_a \backslash \{a,u\}) \cup \{x_a, x_u\}$ is a total dominating set of $H^a$. Now, observe that $|(S_a \backslash \{a,u\}) \cup \{x_a, x_u\}| = |S_a| < |T_a| < \gamma_t (H^a)$. This is a contradiction since $\gamma_t (H^a)$ is the minimum cardinality of total dominating set of $H^a$.
	
	Suppose $a \in T$. Then $T_a = S_a$, that is, $|T_a| = |S_a|$. By Lemma \ref{Lem:3}, $S_a$ is a total dominating set of $H^a$. Now, $|S_a| = |T_a| < \gamma_t (H^a)$. This is a contradiction since $\gamma_t (H^a)$ is the minimum cardinality of total dominating set of $H^a$. Therefore, $$\gamma_{mt} (G \circ H) = |V(G)| \gamma_t(H).$$
\end{proof}

\end{document}